\documentclass{amsart}
\usepackage{graphicx}
\usepackage{amscd}
\usepackage{amsmath}
\usepackage{amsfonts}
\usepackage{amssymb}

\theoremstyle{plain}
\newtheorem{theorem}{Theorem}
\newtheorem{corollary}[theorem]{Corollary}
\newtheorem{lemma}[theorem]{Lemma}
\newtheorem{proposition}[theorem]{Proposition}
\theoremstyle{definition}
\newtheorem{example}[theorem]{Example}

\newtheorem{definition}[theorem]{Definition}

\newtheorem{remark}[theorem]{Remark}
\theoremstyle{remark}

\begin{document}
\title{Commutative rings with square-difference factor absorbing factorization}

\author{Malik Tusif Ahmed}
\address{ Department of Mathematics and Statistics, Hazara University, Mansehra, Pakistan}
\email{tusifahmed@hu.edu.pk, maliktusifahmed@gmail.com (Ahmed)}

\author[H. Kim]{Hwankoo Kim}
\address{ Division of Computer Engineering, Hoseo University, Asan 31499, Republic of Korea}\email{hkkim@hoseo.edu  (Kim)}

\author{Hafnaoui Hidouri}
\address{ Department of Mathematics,
Faculty of Sciences, Monastir, Tunisia} 
\email{hafnaouihidouri6@gmail.com (Hidouri)}

\author{Ahmed Hamed}
\address{ Department of Mathematics,
Faculty of Sciences, Monastir, Tunisia}
\email{hamed.ahmed@hotmail.fr (Hamed)}

\begin{abstract}
 

We introduce and study sdf-absorbing factorization rings (SAF-rings), in which every regular ideal is a finite product of square-difference factor absorbing ideals. We investigate their behavior under direct products, localizations, factor rings, and module idealizations, and establish connections with SP-rings, Dedekind rings, and $N$-rings. We also introduce strongly sdf-absorbing factorization rings (SSAF-rings), relate them to SSP-rings and almost multiplication rings, and prove that the Noetherian SSAF-rings are precisely the ZPI-rings. Applications to module idealizations and monoid rings are also presented.  
  \end{abstract}

\keywords{radical factorization, ring with zero divisors, sdf-absorbing ideals, Dedekind rings, ZPI-rings, N-rings, almost multiplication ring}

\maketitle


\section{Introduction}

 The factorization of ideals into distinguished classes of ideals is a fundamental topic in multiplicative ideal theory. Classical examples include Dedekind domains, in which every nonzero proper ideal is a finite product of prime ideals, SP-domains, in which every nonzero proper ideal is a finite product of radical ideals, and ISP-domains, in which every nonzero ideal is an invertible ideal time a product of radical ideals. These notions have been extended to rings with zero-divisors by considering regular ideals and regular elements; see, for example, \cite{MTA, AD, AD1, AMZ, DMZ, O, VY}. Such factorization properties provide useful connections among local structure, dimension theory, chain conditions, and the arithmetic of ideals. 

A recent addition to this framework is the notion of a square-difference factor absorbing ideal, introduced and studied in \cite{ABC}. An ideal $I$ of a commutative ring $R$ is said to be \emph{square-difference factor absorbing}, or simply
 \emph{sdf-absorbing}, if, whenever
  \[ 0\neq a,b\in R \quad\text{and}\quad a^{2}-b^{2}\in I, \] 
  one has $a+b\in I$ or $a-b\in I$. 
  Since \[ a^{2}-b^{2}=(a+b)(a-b), \] 
  this property may be regarded as an absorption condition adapted to the classical difference-of-squares factorization. Every prime ideal is sdf-absorbing, and every sdf-absorbing ideal is radical \cite{ABC}. Thus, sdf-absorbing ideals form a natural intermediate class through which prime and radical ideal factorization can be studied.
  
   In the present paper, we develop a factorization theory based on sdf-absorbing ideals. We call a ring $R$ an \emph{sdf-absorbing factorization ring}, abbreviated an \emph{SAF-ring}, if every regular ideal of $R$ can be expressed as a finite product of sdf-absorbing ideals. In the domain case, this means that every proper ideal admits such a factorization. Since every sdf-absorbing ideal is radical, every SAF-ring is an SP-ring. The reverse implication does not hold in general. Nevertheless, the two notions coincide in characteristic $2$. At the other extreme, when $2$ is invertible, the SAF property becomes considerably more restrictive: an SAF-ring with $2\in U(R)$ is a Dedekind ring.
   
    We first establish several structural properties of SAF-rings. We show that the class is closed under finite direct products and under localization at multiplicative subsets of regular elements. We also examine factor rings and demonstrate that arbitrary epimorphic images and arbitrary localizations of SAF-rings need not retain the SAF property.

  A ring $R$ is called a \emph{Dedekind ring} if every regular ideal of $R$ is a finite product of prime ideals. Since every prime ideal is sdf-absorbing, every Dedekind ring is an SAF-ring. We show that the converse holds whenever $2$ is invertible in $R$. 
      Moreover, an SAF-ring is Dedekind precisely when it is regular-Noetherian.
      
        The class of SAF-rings is also related to $N$-rings. Recall that a ring $R$ is called an \emph{$N$-ring} if the regular localization $ R_{(M)}$  is a discrete rank-one Manis valuation ring for every maximal regular ideal $M$ of $R$. For domains, the $N$-ring property is equivalent to being almost Dedekind. We establish the chain of implications    
    
    \[ \text{Dedekind rings} \Longrightarrow \text{SAF-rings} \Longrightarrow \text{SP-rings} \Longrightarrow N\text{-rings}, \] 
    and the implications are, in general, non-reversible, we provide the counterexamples. Under a finiteness condition, however, the first two classes agree: the regular-Noetherian SAF-rings are precisely the Dedekind rings.
    
     Module idealizations constitute another important source of rings with zero-divisors and provide an effective method for testing the behavior of ideal-theoretic properties. For a ring $A$ and an $A$-module $E$, we study the SAF property of the idealization $A(+)E$. We prove that its sdf-absorbing ideals are homogeneous and describe them in terms of sdf-absorbing ideals of $A$. This leads to a characterization of when $A(+)E$ is an SAF-ring, expressed through factorizations of suitable ideals of $A$ together with the condition $sE=E$ for the relevant regular elements $s$. A ring $R$ is called a \emph{ZPI-ring} if every proper ideal of $R$ is a finite product of prime ideals. The characterization also yields examples of SAF-rings that are not ZPI-rings and illustrates that the SAF property can occur in rings of dimension greater than one. 
     
     We then introduce a stronger notion. A ring is called a \emph{strongly sdf-absorbing factorization ring}, or an \emph{SSAF-ring}, if every ideal is a finite product of sdf-absorbing ideals. Every SSAF-ring is an SAF-ring and, because sdf-absorbing ideals are radical, every SSAF-ring is an SSP-ring. We prove the non-reversible implication chain 
     \[ \text{SSAF-rings} \Longrightarrow \text{SSP-rings} \Longrightarrow \text{almost multiplication rings}. \] 
     In the Noetherian setting, these classes admit a classical characterization: a ring is a Noetherian SSAF-ring if and only if it is a Noetherian SSP-ring, equivalently, if and only if it is a ZPI-ring. 
     
     The strongly factorizing case is also investigated for module idealizations. We show that if $A(+)E$ is an SSAF-ring, then every ideal of $A(+)E$ is homogeneous and $E$ is necessarily a multiplication $A$-module. Conversely, if $A$ is a von Neumann regular ring of characteristic $2$ and $E$ is a multiplication $A$-module, then $A(+)E$ is an SSAF-ring. These results lead to an equivalence involving SSAF-rings, SSP-rings, almost multiplication rings, and idealizations over von Neumann regular rings under an appropriate hypothesis on the annihilator of an element of $E$. 
     
     Finally, we consider monoid rings. For a monoid $\Gamma$ whose only invertible element is $0$, we prove that the SSAF property descends from $A[\Gamma]$ to the coefficient ring $A$. In particular, if the polynomial ring $A[X]$ is an SSAF-ring, then $A$ is an SSAF-ring. Together, these results establish the basic theory of SAF- and SSAF-rings and clarify their position among the radical and prime ideal factorization classes.

\section{sdf-absorbing factorization rings}

All rings in this section are {\em Marot rings}, that is,  their regular ideals are generated by regular elements (see \cite[page 31]{H}).
Recall that an ideal $I$ in a ring $R$ is square-difference factor absorbing (sdf-absorbing) if whenever $a^2-b^2\in I$ for $0\neq a,b\in R$, then $a+b\in I$ or $a-b\in I$. In this section, we study those rings in which every regular ideal can be expressed as a product of sdf-absorbing ideals. We begin by formally restating the definition.

\begin{definition} A ring is called an \emph{sdf-absorbing factorization ring}, or briefly an \emph{SAF-ring}, if every regular ideal is a finite product of sdf-absorbing ideals. \end{definition}

 Thus, an SAF-domain is an integral domain in which every nonzero proper ideal is a finite product of sdf-absorbing ideals. Every total quotient ring is an SAF-ring. Since every sdf-absorbing ideal is radical by \cite[Theorem~2.2]{ABC}, every SAF-ring is an SP-ring. Recall that an \emph{SP-ring} is a ring in which every regular ideal is a finite product of radical ideals; see \cite{AD}. The converse does not hold in general, We will present counterexample to the converse of this result later in this section. 
 
 However, the following result, which is an immediate consequence of \cite[Theorem~2.4]{ABC}, shows that the converse holds for rings of characteristic \(2\).

\begin{theorem}\label{ch2}
Let $R$ be ring with $Char(R)=2$.Then $R$ is SAF-ring iff $R$ is an SP-ring.
\end{theorem}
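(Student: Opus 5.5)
The plan is to show that in characteristic $2$ the classes of sdf-absorbing ideals and radical ideals coincide. Once that is established, the two factorization properties are literally the same condition on regular ideals. One direction, SAF $\Rightarrow$ SP, needs no characteristic hypothesis. It was already noted above: every sdf-absorbing ideal is radical by \cite[Theorem~2.2]{ABC}, so a factorization of a regular ideal into sdf-absorbing ideals is already a factorization into radical ideals.

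For the converse, I would invoke \cite[Theorem~2.4]{ABC}. As I understand it, it says that when $\mathrm{Char}(R)=2$, an ideal $I$ is sdf-absorbing if and only if it is radical. For completeness I would sketch why, since the argument is short. Suppose $I$ is radical and $a^2-b^2\in I$ with $0\neq a,b$. In characteristic $2$ we have $a^2-b^2=(a-b)^2$ and $a+b=a-b$. Hence $(a-b)^2\in I$, so $a-b\in\sqrt I=I$, and therefore both $a+b$ and $a-b$ lie in $I$. Thus every radical ideal is sdf-absorbing.

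Now let $R$ be an SP-ring of characteristic $2$ and $J$ a regular ideal. Write $J=I_1\cdots I_n$ with each $I_k$ radical. By the previous paragraph each $I_k$ is sdf-absorbing, so $J$ is a finite product of sdf-absorbing ideals, and $R$ is an SAF-ring.

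The only point requiring care is the identity $(a-b)^2=a^2-2ab+b^2=a^2-b^2$. This uses $2ab=0$ and $-b^2=b^2$, both valid in characteristic $2$. The nonzero hypothesis on $a,b$ plays no role in this direction, so there is no real obstacle. I expect the proof in the paper to simply cite \cite[Theorem~2.4]{ABC} together with \cite[Theorem~2.2]{ABC}.
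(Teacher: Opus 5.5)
Your proposal is correct and takes the same approach as the paper, which records the theorem as an immediate consequence of \cite[Theorem~2.4]{ABC} (radical $\Leftrightarrow$ sdf-absorbing in characteristic $2$) together with \cite[Theorem~2.2]{ABC}; your direct check via $a^2-b^2=(a-b)^2$ and $a+b=a-b$ is exactly the content of that citation. One small precision: the fact that sdf-absorbing ideals are radical needs the ideal to be nonzero (for instance, $(0)$ is sdf-absorbing but not radical in $\mathbb{Z}_4$), which is harmless here because every factor of a regular ideal contains a regular element and so is nonzero.
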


\begin{remark}
An epimorphic image of an SAF-ring is not in general an SAF-ring. Indeed, if $A$ is a domain  which is not SAF (e.g. a domain of dimension $\geq 2$, which is not SP, see \cite{O}, and hence not SAF), then, by \cite[Theorem 3.5]{AW}, the idealization ring $B=A\oplus K/A$ is a total quotient ring (hence SAF) and $A$ is an epimorphic image of $B$ (here $K$ is the quotient field of $A$). On the other hand, the cross product of SAF-rings is SAF-ring.
\end{remark}

\begin{proposition}
Let $B$ be the finite direct products of some family of rings $(A_i)_{i=1,\ldots , n}$. Then $B$ is an SAF-ring if and only if each $A_i$ is an SAF-ring.
\end{proposition}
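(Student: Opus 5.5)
We reduce to the case $n=2$: a finite direct product can be written as $A_1\times(A_2\times\cdots\times A_n)$, so the general case follows by induction on $n$. Let $B=A\times C$. Every ideal of $B$ has the form $I\times J$ with $I$ an ideal of $A$ and $J$ an ideal of $C$. An element $(a,c)$ is regular exactly when $a$ and $c$ are both regular, so $I\times J$ is regular iff $I$ and $J$ are both regular. Products are computed componentwise: $(I\times J)(I'\times J')=II'\times JJ'$. We also note that the whole ring trivially satisfies the sdf-absorbing condition, so the factor $A$ or $C$ causes no trouble in what follows.

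\emph{Sufficiency.} Assume $A$ and $C$ are SAF-rings and let $I\times J$ be a regular ideal of $B$. If $I$ is proper, write $I=I_1\cdots I_r$ with each $I_k$ sdf-absorbing; if $I=A$, take the empty product. Do the same for $J=J_1\cdots J_s$. Then
\[
I\times J=(I_1\times C)\cdots(I_r\times C)(A\times J_1)\cdots(A\times J_s).
\]
So it suffices to show that $I_k\times C$ is sdf-absorbing in $B$ whenever $I_k$ is sdf-absorbing in $A$; the case $A\times J_l$ is symmetric.

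Take nonzero $(a_1,a_2),(b_1,b_2)\in B$ with $(a_1,a_2)^2-(b_1,b_2)^2\in I_k\times C$, that is, $a_1^2-b_1^2\in I_k$. The second coordinate is automatic, so we only need $a_1+b_1\in I_k$ or $a_1-b_1\in I_k$.
\begin{itemize}
\item If $a_1,b_1$ are both nonzero, this is exactly the sdf-absorbing property of $I_k$.
\item If, say, $a_1=0$, then $b_1^2\in I_k$. Since sdf-absorbing ideals are radical \cite[Theorem~2.2]{ABC}, we get $b_1\in I_k$, hence $a_1\pm b_1\in I_k$. The case $b_1=0$ is the same.
\end{itemize}
The main subtlety of the proof is here: "nonzero" in $B$ allows a component to be zero, and the radical property handles exactly those cases.

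\emph{Necessity.} Assume $B$ is an SAF-ring and let $I$ be a regular ideal of $A$. Then $I\times C$ is regular in $B$, so $I\times C=P_1\cdots P_m$ with each $P_k=I_k\times J_k$ sdf-absorbing. Projecting onto the first coordinate gives $I=I_1\cdots I_m$. It remains to check that each $I_k$ is sdf-absorbing in $A$.

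Let $0\ne a,b\in A$ with $a^2-b^2\in I_k$. Consider the elements $(a,1)$ and $(b,1)$, which are nonzero in $B$. Their squares differ by $(a^2-b^2,0)\in P_k$. Since $P_k$ is sdf-absorbing, either $(a+b,2)\in P_k$ or $(a-b,0)\in P_k$. Reading off the first coordinate gives $a+b\in I_k$ or $a-b\in I_k$. The argument for $C$ is symmetric.

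Finally, the standing Marot hypothesis is compatible with the construction, since a finite product of Marot rings is Marot: regular elements are built componentwise.
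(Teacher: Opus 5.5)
Your proof is correct and follows the paper's route: you reduce to $n=2$, factor the components, use the factors $I_k\times C$ and $A\times J_l$, and project onto a coordinate for the converse. The only difference is that you verify directly the product criterion the paper cites from \cite[Theorem~4.13(b)]{ABC}, and this is fine here because each $I_k\supseteq I$ contains a regular element, so $I_k\neq 0$. That matters because the radical property you invoke really holds only for nonzero sdf-absorbing ideals: the zero ideal of $\mathbb{Z}_4$ is sdf-absorbing but not radical.
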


\begin{proof}
The proof is done by induction on $n$ and it suffices to check it for $n=2$. Assume that $A_1$ and $A_2$ are SAF-rings. If $I$ is a proper regular ideal of $B=A_1\times A_2$, then $I=I_1\times I_2$, where $I_1$ and $I_2$ are regulars ideals of $A_1$ and $A_2$, respectively. By assumption each $I_i$ (which is proper) is a product of sdf-absorbing ideals: say $I_i=\prod_{j=1}^n {I_i}_j$, where ${I_i}_j$ (at least one of them is proper) are either $A_i$ or sdf-absorbing ideals of $A_i$, for each $i$. Then $I=I_1\times I_2=\prod_{j=1}^n {I_1}_j\times \prod_{j=1}^n {I_2}_j=\prod_{j=1}^n ({I_1}_j\times A_2)\prod_{j=1}^n (A_1\times {I_2}_j)$. By \cite[Theorem 4.13(b)]{ABC} each ${I_1}_j\times A_2$ and $A_1\times {I_2}_j$ are sdf-absorbing ideals, therefore $B$ is an SAF-ring. Conversely, assume that $B$ is an SAF-ring and $I$ (respectively, $J$) be a proper radical ideal of $A_1$ (respectively, $A_2$). Then $I\times A^2$ (respectively, $A_1\times J$) can be factored as a product of sdf-absorbing ideals, and hence $I$ (respectively, $J$) is a product f sdf-absorbing ideals of $A_1$ (respectively, $A_2$) cf. \cite[Theorem 4.13(b)]{ABC}. Concluded that $A_1$ (respectively, $A_2$) is an SAF-ring.
\end{proof}

\begin{theorem}
Let $I$ be an sdf-absorbing ideal of an SP-ring $R$. The $R/I$ is SAF-ring.
\end{theorem}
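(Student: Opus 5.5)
The plan is to reduce everything to the structure of the quotient $\bar R=R/I$, using that sdf-absorbing ideals are radical \cite[Theorem~2.2]{ABC} and the classification of sdf-absorbing ideals in \cite{ABC}. The first step is to record that $\bar R$ is reduced, since $I$ is radical, and that the zero ideal of $\bar R$ is sdf-absorbing. For the latter, if $\bar a^2-\bar b^2=0$ with $\bar a,\bar b\neq 0$, then $a,b\neq 0$ in $R$ and $a^2-b^2\in I$, so $a\pm b\in I$. I then intend to invoke the structural description of sdf-absorbing ideals from \cite{ABC}: a proper sdf-absorbing ideal is either prime, or has $\operatorname{Char}(R/I)=2$, or is an intersection of two primes with $R/I$ of a very restricted form. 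This splits the proof into cases according to whether $2\in I$.

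\emph{Case $\operatorname{Char}(\bar R)=2$.} By Theorem~\ref{ch2} it suffices to show that $\bar R$ is an SP-ring. Let $J/I$ be a regular ideal of $\bar R$. I would lift it to $J\supseteq I$ and try to arrange that $J$ is regular in $R$. Because $R$ is Marot, I would pick a regular element $r\in R$ and replace $J$ by a suitable ideal such as $J$ or $J+rR$, chosen with $r\in J$ so that the image in $\bar R$ is unchanged. Then I factor $J=\prod J_k$ into radical ideals using the SP-property of $R$. Passing to $\bar R$ gives $J/I=\prod (J_k+I)/I$. The obstacle here is that $(J_k+I)/I$ need not be radical. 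To handle this I would use that $I$ is radical and sdf-absorbing, hence close to prime, to show that $\sqrt{J_k+I}$ can replace $J_k+I$ without changing the product. Failing that, I would factor $J/I$ directly as $\sqrt{J/I}\cdot(\text{cofactor})$ and iterate on the cofactor, in the style of the SP-ring arguments of \cite{AD,O}.

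\emph{Case $2\notin I$.} In this case the classification should force $I$ to be prime, or $\bar R$ to be a product of at most two domains. Finite products are handled by the preceding Proposition, so it suffices to treat a domain $\bar R=R/P$. There, every nonzero proper ideal is regular, and I would lift it, factor it in $R$ as above, and push the factors down. For the resulting radical factors to be sdf-absorbing, I would then show that they can be taken prime, using the behaviour of SP-rings: their regular localizations are valuation-like, so that SP-rings are $N$-rings.

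The main obstacle throughout is the lifting step. A regular ideal of $R/I$ need not come from a regular ideal of $R$, because $I$ itself is typically not regular, and the image of a radical ideal need not remain radical. I would attack this first via the Marot hypothesis, adjoining regular elements to the lift, and via the fact that $I$ is contained in few minimal primes. If that fails, I would fall back to the special situation in which $I$ consists of zero-divisors, so that $R/I$ is close to a total quotient ring and the regular ideals of $R/I$ are trivial.
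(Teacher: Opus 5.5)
\textbf{The proposal has a genuine gap, and it cannot be closed: the statement is false as written.}

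The step that fails is the one you flag yourself. You want to lift a regular ideal $J/I$ of $R/I$ to a regular ideal of $R$ with the same image, but nothing guarantees that $R$ has any proper regular ideal containing $I$. If $R$ is a total quotient ring, every regular $r$ is a unit, so $J+rR=R$, and the SP hypothesis on $R$ says nothing. Your fallback is also wrong: $I\subseteq Z(R)$ does not make the regular ideals of $R/I$ trivial.

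Here is a counterexample. Let $k$ be any field, $D=k[[y,z]]$ with maximal ideal $\mathfrak{m}=(y,z)$, and $R=D(+)k$ with $k=D/\mathfrak{m}$.
\begin{itemize}
\item If $a\in\mathfrak{m}$, then $(a,e)(0,1)=0$; if $a\notin\mathfrak{m}$, then $(a,e)$ is a unit. So $R$ is a total quotient ring, hence Marot and vacuously an SP-ring.
\item The ideal $I=0(+)k$ is prime because $R/I\cong D$, so it is sdf-absorbing. It also consists of zero-divisors.
\item $D$ is not even an SP-ring. Suppose $(y^2,z)=J_1\cdots J_n$ with proper radical ideals $J_i$. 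Then $n\geq 2$ would put $z$ in $\mathfrak{m}^2$, and $n=1$ would make $(y^2,z)$ radical, which it is not.
\item Since sdf-absorbing ideals are radical, $R/I\cong D$ is not an SAF-ring.
\end{itemize}
Taking $k=\mathbb{F}_2$ refutes your characteristic-$2$ case, and $k=\mathbb{Q}$ refutes your case $2\notin I$.

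The paper itself exhibits the same phenomenon twice. In its remark on localizations, $xA$ is a prime of the total quotient ring $A=k[[x,y,z,v]]/(x^2,xy,xz,xv)$ with $A/xA\cong k[[y,z,v]]$. In the remark after Theorem~\ref{ch2}, $0(+)K/A$ is a prime of the total quotient ring $A(+)K/A$ whose quotient is the domain $A$.

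The paper's one-line proof does not supply the missing idea either. It asserts that $\operatorname{Char}(R)=2$ follows from the hypotheses, which already fails for $R=\mathbb{Z}$ and $I=3\mathbb{Z}$. It then applies Theorem~\ref{ch2}, which only reduces the question to whether $R/I$ is an SP-ring. That is exactly the property that does not descend from $R$, as the example with $k=\mathbb{F}_2$ shows.

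Some extra hypothesis is needed. For instance, your lifting argument does work when $I$ is regular:
\begin{itemize}
\item every $J\supseteq I$ is then regular in $R$;
\item any radical factorization $J=J_1\cdots J_n$ has $I\subseteq J\subseteq J_k$ for each $k$;
\item so $J/I=\prod_k J_k/I$ with each $J_k/I$ radical, and $R/I$ is an SP-ring;
\item if also $\operatorname{Char}(R/I)=2$, then $R/I$ is an SAF-ring, because in characteristic $2$ proper radical ideals are sdf-absorbing.
\end{itemize}
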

\begin{proof}
From \cite[Theorem 2.5]{ABC} $Char(R)=2$. Now apply Theorem \ref{ch2}.
\end{proof}

Recall that a Dedekind ring is a ring whose regular ideals are products of prime ideals (see \cite[Theorem 17]{Gf}). Because every prime ideal is sdf-absorbing, every Dedekind ring is necessarily an SAF-ring. In general, the reverse implication fails. However, the next result, which follows from \cite[Theorem 2.6]{ABC}, establishes a converse under an appropriate additional hypothesis.
\begin{proposition}\label{p1}
Let $R$ be an SAF-ring with $2\in U(R)$, then $R$ is a Dedekind ring.
\end{proposition}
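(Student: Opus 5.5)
The plan is to reduce everything to the fact, quoted from \cite[Theorem~2.6]{ABC}, that when $2\in U(R)$ every sdf-absorbing ideal is prime. Once that is in hand, any factorization of a regular ideal into sdf-absorbing ideals is already a factorization into prime ideals, so $R$ is a Dedekind ring in the sense recalled above (every regular ideal is a finite product of prime ideals).

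The key steps, in order, are as follows. First, I fix a regular ideal $I$ of $R$. By the SAF hypothesis I write $I=P_1P_2\cdots P_n$ with each $P_i$ sdf-absorbing; any factor equal to $R$ can be discarded, so the $P_i$ may be taken proper. Second, I show each $P_i$ is prime. This is the content of \cite[Theorem~2.6]{ABC}, which I would invoke directly. For completeness I would also sketch the reason. Given $xy\in P$ with $P$ sdf-absorbing and $2$ a unit, put $a=(x+y)/2$ and $b=(x-y)/2$. Then $a^2-b^2=xy\in P$, $a+b=x$ and $a-b=y$. So if $a,b\neq 0$, the sdf-absorbing condition forces $x\in P$ or $y\in P$. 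Third, I conclude that $I$ is a product of primes, so $R$ is Dedekind.

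The main obstacle is the degenerate case in the primality argument where $a=0$ or $b=0$, that is, $y=-x$ or $y=x$, since the definition only applies to nonzero $a,b$. If $y=\pm x$ then $xy\in P$ gives $x^2\in P$. Since sdf-absorbing ideals are radical \cite[Theorem~2.2]{ABC}, this yields $x\in P$, which settles the case. Note also that $P$ is regular whenever it contains the regular ideal $I$, although primality does not need this. Any further subtleties, such as the case where $R$ has very small residue fields, are exactly what \cite[Theorem~2.6]{ABC} handles, so I would rely on that citation rather than redo them.
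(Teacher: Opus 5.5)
Your proof is correct and takes essentially the paper's route: since $2\in U(R)$, \cite[Theorem~2.6]{ABC} makes each sdf-absorbing factor of a regular ideal prime, so the SAF factorization is already a prime factorization. One correction to your side remark: the radicality and primality results of \cite{ABC} hold only for \emph{nonzero} sdf-absorbing ideals. For example, the zero ideal of $\mathbb{F}_3[X]/(X^2)$ is sdf-absorbing and $2$ is a unit there, but it is not prime. So the fact that each $P_i\supseteq I$, and hence $P_i\neq 0$, is genuinely needed rather than superfluous.
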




 For  be a prime ideal $P$ of a ring $A$, denote (as usual) by $A_{(P)}$  the regular localization of $A$ at $P$, i.e. the fraction ring  of $A$ with denominators in $Reg(A)\cap (A-P)$. The ring embeds naturally in the total quotient ring $K=A_{Reg(A)}$ of $A$; besides, if $A$ is a Marot ring, then $A_{(P)}$ coincides with the {\em large quotient ring} $A_{[P]}=\{ x\in K\mid sx\in A$ for some $s\in A-P\}$, see \cite[page 28 and Theorem 7.6]{H}.

\begin{proposition}\label{1}
 If $A$ is an SAF-ring, $S\subseteq Reg(A)$ a multiplicative set and $P$ a prime ideal of $A$, then  $A_S$ \em (\em in particular $A_{(P)}$\em )\em\  is an SAF-ring.
 
\end{proposition}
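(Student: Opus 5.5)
We follow the standard pattern for localization results in factorization theory (as for SP-rings in \cite{AD}). The plan is to contract a regular ideal of $A_S$ to $A$, factor it there, and extend the factorization back. Since $S\subseteq Reg(A)$, the map $A\to A_S$ is injective and $A_S$ sits inside the total quotient ring $K$ of $A$. Moreover $A_S$ is again a Marot ring; this is standard for fraction rings of Marot rings with regular denominators \cite{H}.

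First step. Let $J$ be a regular ideal of $A_S$. Because $A_S$ is Marot, $J$ contains a regular element $x/s$ with $x\in A$, $s\in S$. Then $x$ is regular in $A$, since an element of $A$ that is regular in $A_S\subseteq K$ is regular in $A$. Put $I=J\cap A$. Then $I$ is a regular ideal of $A$ containing $x$, and $J=IA_S$. If $I=A$ then $J=A_S$ and there is nothing to prove (or $J$ is the empty product). Otherwise $I=I_1\cdots I_n$ with each $I_k$ an sdf-absorbing ideal of $A$, and hence
\[ J=IA_S=(I_1A_S)\cdots(I_nA_S). \]

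Second step, the key lemma. We must show: if $P$ is an sdf-absorbing ideal of $A$ with $PA_S\neq A_S$, then $PA_S$ is sdf-absorbing in $A_S$. Factors with $I_kA_S=A_S$ can simply be dropped.

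To prove the lemma, take nonzero $a/s,\,b/t\in A_S$ with $(a/s)^2-(b/t)^2\in PA_S$. Rewriting over the common denominator $st$, we get $(at)^2-(bs)^2\in PA_S\cap A$. So $u\big((at)^2-(bs)^2\big)\in P$ for some $u\in S$. The elements $at$ and $bs$ are nonzero because $s,t$ are regular.

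The main obstacle is the factor $u$: the sdf condition concerns only a difference of two squares, not arbitrary multiples. The first remedy is to multiply by $u$ once more, giving $u^2\big((at)^2-(bs)^2\big)=(uat)^2-(ubs)^2\in P$, which is again a difference of squares of nonzero elements (here $u$ regular is used). Applying the sdf-absorbing property gives $u(at\pm bs)\in P$. Hence $at\pm bs\in PA_S$, and dividing by $st$ gives $a/s\pm b/t\in PA_S$, proving the lemma.

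If instead one wants to avoid the step "$u\big((at)^2-(bs)^2\big)\in P$", one can appeal to the known behaviour of sdf-absorbing ideals under localization in \cite{ABC}. The direct computation above seems sufficient, however. Note also that contraction $PA_S\cap A$ need not be used at all.

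Finally, the case $A_{(P)}$ is the special case $S=Reg(A)\cap(A\setminus P)$.
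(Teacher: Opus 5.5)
Your proposal is correct and takes the same route as the paper: contract a regular ideal of $A_S$ to $A$, factor the contraction into sdf-absorbing ideals, and extend the factorization back, using that extensions of sdf-absorbing ideals to $A_S$ stay sdf-absorbing. The only differences are that the paper cites \cite[Theorem 2.9]{ABC} for this extension fact, whereas you prove it directly via $u^2\bigl((at)^2-(bs)^2\bigr)=(uat)^2-(ubs)^2$, and that you explicitly discard factors with $I_kA_S=A_S$, a case the paper's proof does not address.
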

\begin{proof}
Let $H$ be a regular ideal of $A_S$. Then the regular ideal $J:=H\cap A$   is a product of sdf-absorbing ideals: $J=J_1\cdots J_n$. 
So each ideal $H_i=J_iA_S$ is sdf-absorbing cf. \cite[Theorem 2.9]{ABC} and $H=H_1\cdots H_n$.
\end{proof}

\begin{remark}\label{13}
A  localization of an SAF-ring at some prime ideal is not necessarily an SAF-ring. Indeed, let $A=k[[x,y,z,v]]/(x^2,xy,xz,xv)$, where $k$ is a field. Then $A$ is a total quotient ring so an SAF-ring. But $A_{(x,y,z)A}=k[[y,z,v]]_{(y,z)}$ is not an SP-ring see \cite[Remark ]{AD} and hence not an SAF-ring. 

Note also that $A/xA=k[[y,z,v]]$, so a factor ring of an SAF-ring is not necessarily an SP-ring and hence not necessary an SAF-ring. Alternatively, we can use the idealization ring $B=k[[x,y,z]]\oplus k$ 
of the $k[[x,y,z]]$-module $k$ 
which is a total quotient ring and note that $B_{(x,y)\oplus k}=k[[x,y,z]]_{(x,y)}$.
\end{remark}

Recall that a ring $A$ is an {\em N-ring} if $A_{(M)}$ is a discrete rank one Manis valuation ring  for each maximal regular ideal $M$ of $A$ (see \cite{Gf} and \cite{L}). A domain is an N-ring iff it is almost Dedekind. Our next result extends \cite[Theorem 2.4]{VY}.

\begin{lemma}\label{10} 
The following implications hold, and none of them is reversible in general:
 \[ \text{Dedekind rings} \Longrightarrow \text{SAF-rings} \Longrightarrow \text{SP-rings} \Longrightarrow \text{N-rings}. \] 
 \end{lemma}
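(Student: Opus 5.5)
The three implications are handled separately, followed by a counterexample for each converse.

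\emph{Implications.} The first implication was already observed before Proposition~\ref{p1}: every prime ideal is sdf-absorbing, so a factorization of a regular ideal into primes is a factorization into sdf-absorbing ideals. The second follows from \cite[Theorem~2.2]{ABC}, since sdf-absorbing ideals are radical. For the third, I would invoke the known result that an SP-ring (Marot) is an N-ring, e.g.\ \cite[Theorem~2.4]{VY} or the corresponding result in \cite{AD}. If a self-contained argument is wanted, localize: by the SP-analogue of Proposition~\ref{1}, $A_{(M)}$ is again SP for each maximal regular $M$. In a quasi-local Marot SP-ring, for a regular element $x\in M$ one has $xM=\sqrt{x}\cdot J$ for radical ideals; the standard Vaughan--Yeagy argument then shows $M$ is invertible and $\bigcap M^n$ contains no regular element. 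From this one concludes that $A_{(M)}$ is a discrete rank-one Manis valuation ring. I would cite rather than reprove this step.

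\emph{Non-reversibility.}
\begin{itemize}
\item SP $\not\Rightarrow$ N fails to reverse: take a non-SP almost Dedekind domain, such as the classical examples of Vaughan--Yeagy/Olberding \cite{O}. It is an N-ring because domains are N-rings iff they are almost Dedekind, but it is not SP.
\item SAF $\not\Rightarrow$ SP fails to reverse: take a non-Dedekind SP-domain containing $1/2$, e.g.\ an SP almost Dedekind domain that is not Noetherian, built over a field of characteristic $0$ (Olberding's examples, or the ring of integer-valued-type constructions over $\mathbb{Q}$). By Proposition~\ref{p1} an SAF-ring with $2$ invertible would be Dedekind, so this ring is SP but not SAF.
\item Dedekind $\not\Rightarrow$ SAF fails to reverse: take an SP-domain of characteristic $2$ that is not Dedekind, e.g.\ a non-Noetherian SP almost Dedekind domain over $\mathbb{F}_2$. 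By Theorem~\ref{ch2} it is SAF, but it is not Dedekind. Alternatively, use a non-discrete rank-one valuation domain of characteristic $2$ whose value group is divisible; its proper nonzero ideals are radical or products of radicals in SP fashion. Non-Noetherian SP-domains with idempotent maximal ideals exist in any characteristic.
\end{itemize}

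\emph{Main obstacle.} The hardest step is producing explicit characteristic-specific SP-domains that are not Dedekind. I would take Olberding's/Loper's construction of non-Noetherian SP almost Dedekind domains as an ascending union of Dedekind domains, such as $\bigcup_n k[x^{1/2^n}]$ localized suitably, or a $\mathbb{Z}$-analogue with $k=\mathbb{Q}$ versus $k=\mathbb{F}_2$. I would check via \cite{O} that the construction works over an arbitrary base field, so that the characteristic can be chosen to be $0$ or $2$ as needed.
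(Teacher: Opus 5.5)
Your handling of the three implications and two of the converses follows the paper's route through Examples~\ref{E1}--\ref{E3}. For SP $\not\Rightarrow$ SAF you use a non-Noetherian SP-domain with $2$ invertible, which Proposition~\ref{p1} rules out as SAF. For SAF $\not\Rightarrow$ Dedekind you use a non-Noetherian SP-domain of characteristic $2$, which is SAF by Theorem~\ref{ch2}. You also cite a non-SP almost Dedekind domain for N $\not\Rightarrow$ SP, a counterexample the paper leaves implicit. Two small points: your bullet labels are scrambled (the first bullet actually shows N $\not\Rightarrow$ SP). And for general Marot rings the third implication needs \cite[Theorem~2.6]{AD}, as the paper uses, since \cite[Theorem~2.4]{VY} covers only domains.

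Two of your side claims are false and must be dropped.

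\emph{The valuation-domain alternative.} A non-discrete rank-one valuation domain $V$ is not SP. Its only nonzero proper radical ideal is the maximal ideal $M$, and $M=M^2$. So $M$ is the only nonzero proper product of radical ideals, while $xV\neq M$ for $0\neq x\in M$. This would also contradict the implication SP $\Rightarrow$ N you have just established, because $V$ is not a DVR. For the same reason, no SP-domain other than a field has an idempotent maximal ideal, so your sentence about non-Noetherian SP-domains with idempotent maximal ideals is wrong.

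\emph{The candidate construction in characteristic $2$.} The ring $\bigcup_n k[x^{1/2^n}]$ cannot work for $k=\mathbb{F}_2$. The tower is purely inseparable: $f(x^{1/2^n})=f(x^{1/2^{n+1}})^2$ for every $f\in\mathbb{F}_2[x]$. Hence every maximal ideal of the union is idempotent, and every localization at a maximal ideal is a non-discrete valuation domain with value group $\mathbb{Z}[1/2]$. No localization of it is almost Dedekind. For the characteristic-$2$ example you need a construction whose behavior does not depend on the characteristic. An example is the union $C=\bigcup_n C_n$ of semilocal PIDs from Example~\ref{E2}, taken over $\mathbb{F}_2$, as the paper does in Example~\ref{E3}.
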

  \begin{proof} 
  For the implication from Dedekind rings to SAF-rings, see the paragraph preceding Proposition~\ref{p1}. The implication from SAF-rings to SP-rings follows directly from the definitions; see also \cite[Theorem~2.2]{ABC}. Finally, the implication from SP-rings to N-rings follows from \cite[Theorem~2.6]{AD}. 
  \end{proof}
  
   The following examples show that the implications in Lemma~\ref{10} cannot, in general, be reversed.

\begin{example}\label{E1}
 Let \(D\) be a non-Noetherian SP-domain of characteristic zero in which \(2\) is a unit; such a domain may be obtained from the non-Noetherian SP-domain construction of Vaughan and Yeagy \cite{VY}. Since every SP-domain is almost Dedekind, \(D\) is an almost Dedekind domain. We claim that \(D\) is not a SAF-domain. Suppose, to the contrary, that \(D\) is a SAF-domain. Since \(2\in U(D)\), every nonzero SDF-absorbing ideal of \(D\) is prime by \cite[Theorem~2.6]{ABC}. Consequently, every nonzero proper ideal of \(D\) is a finite product of prime ideals. A domain in which every nonzero proper ideal is a finite product of prime ideals is a Dedekind domain. In particular, it is Noetherian. This contradicts the choice of \(D\). Therefore, \(D\) is an SP-domain, and hence an almost Dedekind domain, which is not a SAF-domain.


\end{example}

\begin{example}\label{E2}

 We now give a concrete example of a domain satisfying the hypotheses of the preceding example. The construction is a suitable modification of \cite[Example~3.4.1]{FHL}, see also the example of \cite[Proposition 17]{AD1}. If \(A\) is a domain and \(P_1,\ldots,P_n\) are prime ideals of \(A\), we denote by \[ A_{P_1\cup\cdots\cup P_n} \] the localization of \(A\) with respect to the multiplicatively closed set \[ A\setminus(P_1\cup\cdots\cup P_n). \] Let \(y,x_1,x_2,\ldots\) be algebraically independent indeterminates over \(\mathbb{Q}\). For each \(n\geq 1\), set \[ C_n= \mathbb{Q}\left[ x_1,\ldots,x_n,\frac{y}{x_1\cdots x_n} \right]_{ (x_1)\cup\cdots\cup(x_n)\cup \left(y/(x_1\cdots x_n)\right) }, \] and define \[ C=\bigcup_{n\geq 1}C_n. \] Since \[ \frac{y}{x_1\cdots x_n} = x_{n+1}\frac{y}{x_1\cdots x_{n+1}}, \] the sequence \[ C_1\subseteq C_2\subseteq C_3\subseteq\cdots \] is ascending. Each \(C_n\) is a semilocal principal ideal domain. Consequently, \(C\) is a one-dimensional B\'ezout domain. Adapting the argument of \cite[Example~3.4.1]{FHL}, the maximal ideals of \(C\) are \[ N=\sum_{n\geq 1} \frac{y}{x_1\cdots x_n}C \] and the principal maximal ideals \[ M_n=x_nC,\qquad n\geq 1. \] For every \(M\in\operatorname{Max}(C)\), one has \[ yC_M=MC_M. \] Indeed, if \(M=M_n\), then all \(x_j\), with \(j\neq n\), become units in \(C_{M_n}\), and hence \(y\) is associated to \(x_n\) in \(C_{M_n}\). Similarly, all the \(x_n\)'s become units in \(C_N\), so \(y\) is associated to \(y/(x_1\cdots x_n)\) in \(C_N\). It follows that \[ yC_M=MC_M \] for every maximal ideal \(M\) of \(C\). In particular, \(yC\) is a radical ideal. Since \(yC\subseteq N\), the maximal ideal \(N\) is non-critical. Moreover, each \(M_n=x_nC\) is itself a principal radical ideal and is therefore non-critical. Thus, \(C\) has no critical maximal ideals. It now follows from \cite[Corollary~2.2]{O} that \(C\) is an SP-domain. In particular, \(C\) is an almost Dedekind domain. We next show that \(C\) is not Noetherian. Put \[ z_n=\frac{y}{x_1\cdots x_n}. \] Then \[ z_n=x_{n+1}z_{n+1}, \] and consequently \[ z_nC\subsetneq z_{n+1}C \] for every \(n\geq 1\). Furthermore, \[ N=\bigcup_{n\geq 1}z_nC. \] Hence \(N\) is not finitely generated, and therefore \(C\) is not Noetherian. Finally, since \(C\) contains \(\mathbb{Q}\), we have \(2\in U(C)\). By \cite[Theorem~2.6]{ABC}, every nonzero SDF-absorbing ideal of \(C\) is prime. If \(C\) were a SAF-domain, then every nonzero proper ideal of \(C\) would be a finite product of prime ideals. This would imply that \(C\) is a Dedekind domain and, in particular, Noetherian, contradicting the fact that \(N\) is not finitely generated. Therefore, \(C\) is a non-Noetherian SP-domain that is not a SAF-domain.

 \end{example}

  Now we gave an example of SAF-domain which is not a Dedekind.
  
  \begin{example}\label{E3}
   Retain the notation and construction of the preceding example, replacing the ground field \(\mathbb{Q}\) by \(\mathbb{F}_{2}\), and denote the resulting domain by \(C^{\prime}\). Assume that the maximal-ideal description and the SP-property established in the preceding construction remain valid for \(C^{\prime}\). Then \(C^{\prime}\) is a non-Noetherian SP-domain of characteristic \(2\). Since \(C^{\prime}\) is an SP-domain, every nonzero proper ideal \(I\) of \(C^{\prime}\) admits a factorization \[ I=J_1J_2\cdots J_n, \] where each \(J_i\) is a radical ideal. Moreover, \[ \operatorname{char}(C^{\prime})=2. \] By \cite[Theorem~2.4]{ABC}, every nonzero proper radical ideal of \(C^{\prime}\) is SDF-absorbing. Consequently, the above factorization is an SDF-absorbing factorization of \(I\). Thus, \(C^{\prime}\) is a SAF-domain. On the other hand, \(C^{\prime}\) is non-Noetherian and therefore cannot be a Dedekind domain. Hence \(C^{\prime}\) is a non-Dedekind SAF-domain. More precisely, it is a non-Noetherian SP-domain of characteristic \(2\) in which every nonzero proper ideal is a finite product of SDF-absorbing ideals. \end{example}

Recall that a {\em regular-Noetherian ring}  is a ring whose regular ideals are finitely generated.  By \cite[Theorem 17]{Gf}, a regular-Noetherian N-ring and hence by \cite[Corollary 2.7]{AD} a regular-Noetherian SP-ring is Dedekind ring. Thus we get:
\begin{corollary}\label{19}
 The regular-Noetherian SAF-rings are exactly the Dedekind rings.
\end{corollary}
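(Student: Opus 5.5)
The corollary needs both inclusions, and each comes from results already quoted in the paper.

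For the inclusion of the Dedekind rings in the regular-Noetherian SAF-rings, take a Dedekind ring $R$. Every prime ideal is sdf-absorbing, so $R$ is an SAF-ring; this is the first implication of Lemma~\ref{10}. It remains to see that $R$ is regular-Noetherian. Here I would rely on the standard characterization of Dedekind rings via \cite[Theorem 17]{Gf}: a ring in which every regular ideal is a product of primes has invertible regular primes. Hence every regular ideal, being a product of invertible ideals, is invertible and therefore finitely generated. If one prefers, this can be organized by first noting that the regular prime factors of a regular ideal are themselves regular. Each such factor is then invertible by the usual Dedekind argument adapted to regular elements in a Marot ring, and invertible ideals are finitely generated.

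For the reverse inclusion, let $R$ be a regular-Noetherian SAF-ring. Apply Lemma~\ref{10}: $R$ is an SP-ring, since sdf-absorbing ideals are radical by \cite[Theorem~2.2]{ABC}. By \cite[Corollary 2.7]{AD}, as recalled just before the statement, a regular-Noetherian SP-ring is a Dedekind ring. An equivalent route is to pass further down the chain in Lemma~\ref{10} to the N-ring property and invoke \cite[Theorem 17]{Gf} for regular-Noetherian N-rings.

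The only step needing care is the first inclusion, namely confirming that Dedekind rings (in the regular-ideal sense) are automatically regular-Noetherian. I would cite \cite[Theorem 17]{Gf} for this rather than reprove it. The rest is a direct composition of Lemma~\ref{10} with the cited results.
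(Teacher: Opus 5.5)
Your proposal is correct and follows essentially the paper's own argument. SAF-rings are SP-rings because sdf-absorbing ideals are radical, and a regular-Noetherian SP-ring is Dedekind by \cite[Corollary 2.7]{AD}, or equivalently via the N-ring property and \cite[Theorem 17]{Gf}. Your explicit treatment of the other inclusion (Dedekind rings are SAF because primes are sdf-absorbing, and regular-Noetherian because their regular ideals are invertible) makes explicit what the paper leaves implicit.
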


 Let $A$ be a ring and  $E$ an $A$-module. Recall that the {\em module idealization} $A(+)E$ of $E$ is the ring with abelian group $A\oplus E$ and multiplication given by $(a,x)(b,y)=(ab,ay+bx)$ for $a,b\in A$ and $x,y\in M$ (see \cite{AW}). Set $S=A-(Z(A)\cup Z(E))$  where $Z(A)$ (resp. $Z(E)$) are the set of zerodivisors on $A$ (resp. $E$).
 The next result translates the SAF-ring property of $A(+)E$ in terms of $A$ and $E$. 

\begin{proposition}\label{6}
 With notations above the following holds:
\begin{itemize}

\item[(i)] sdf-absorbing ideals of $A(+)E$ are homogeneous, more precisely they have the form $J(+)E$ where $J$ is an sdf-absorbing ideal.
 
\item[(ii)]   $A(+)E$ is an SAF-ring iff every ideal $I$ of $A$ not disjoint of $S$  is a product of sdf-absorbing ideals and $sE=E$ for each $s\in S$.

\end{itemize}
\end{proposition}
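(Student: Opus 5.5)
Part (i) rests on the observation that every sdf-absorbing ideal is radical \cite[Theorem~2.2]{ABC}. Let $H$ be an sdf-absorbing ideal of $R=A(+)E$. For any $x\in E$ we have $(0,x)^2=(0,0)\in H$. Since $H$ is radical, $(0,x)\in H$, so $0(+)E\subseteq H$. Put $J=\{a\in A\mid (a,x)\in H \text{ for some } x\in E\}$, the image of $H$ under the projection $R\to A$. Because $0(+)E\subseteq H$, we get $H=J(+)E$, so $H$ is homogeneous.

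To see that $J$ is sdf-absorbing, I will use the ring isomorphism $R/(0(+)E)\cong A$, under which $H$ corresponds to $J$. It then suffices to show that sdf-absorbing ideals containing a nil ideal $N$ pass to $R/N$. Here is the direct check. Take $0\ne a,b\in A$ with $a^2-b^2\in J$. Then $(a,0),(b,0)$ are nonzero elements of $R$ and $(a,0)^2-(b,0)^2=(a^2-b^2,0)\in J(+)E=H$. Hence $(a\pm b,0)\in H$ for one choice of sign, and so $a+b\in J$ or $a-b\in J$. The converse, that $J(+)E$ is sdf-absorbing whenever $J$ is, runs the same way. Take nonzero $(a,x),(b,y)$. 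If $a,b\ne 0$, the $A$-component of the square difference gives $a\pm b\in J$, and then $(a,x)\pm(b,y)\in J(+)E$. If $a=0$ (or $b=0$), then $b^2\in J$, so $b\in J$ because $J$ is radical, and both the sum and the difference lie in $J(+)E$. I would record this converse as well, since (ii) needs it.

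For (ii) I will use the standard facts from \cite{AW}. The regular elements of $A(+)E$ are exactly the $(s,x)$ with $s\in S$. An ideal of the form $I(+)E$ is regular iff $I\cap S\neq\emptyset$. Products of homogeneous ideals satisfy $(I_1(+)E)(I_2(+)E)=I_1I_2(+)(I_1+I_2)E$, with $(I_1+I_2)E=E$ whenever the $I_i$ are large enough.

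\emph{Forward direction.} Suppose $A(+)E$ is an SAF-ring and $s\in S$. The ideal $(s,0)R=sA(+)sE$ is regular, so by (i) it equals a product $\prod J_i(+)E$. Such a product has $E$-component $(\sum_i \prod_{k\ne i}J_k)E$, which contains $sE$ and lies inside $E$. The key step is to show $sE=E$. I would first localize the argument: for $n=1$ the product is $J_1(+)E$, and comparing with $sA(+)sE$ gives $sE=E$ at once. For $n\ge 2$, the product is contained in $\mathrm{rad}(sA)(+)E$-type ideals, and we have the equality $sA(+)sE=\prod(J_i(+)E)\supseteq J_1\cdots J_n(+)J_2\cdots J_nE$. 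Each $J_i\ni s$-power, and $s$ is regular on $E$, so I hope to argue $J_2\cdots J_n E\supseteq s^{m}E$. That alone only gives $s^mE\subseteq sE$, which is not enough. So I would instead use the ideal $(s,x)R$ for varying $x$, or the regular ideal $sA(+)E$ itself. Factoring $sA(+)E$ and comparing $A$-components gives the factorization of $sA$, but the condition $sE=E$ still has to come from $(s,0)R$. This is the main obstacle. My fallback is the equation $(s,0)R=J(+)E\cdot K$ where $K$ is homogeneous: its $E$-part is $JE_K+\dots\supseteq sE$. Since every factor contains $0(+)E$, the product contains $(0(+)E)\cdot\prod_{k\ne 1}(J_k(+)E)\supseteq (0(+)J_2\cdots J_nE)$, and $s\in J_2\cdots J_n$ is false in general. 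So the real argument should be that $(s,0)R$ is already homogeneous of the form $J(+)E$ only if $sE=E$, and I expect a cleverer choice, namely the regular element $(s,0)$ together with the multiplicativity of factorizations, to be needed.

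\emph{Conditions on $A$.} Given $I$ meeting $S$, the ideal $I(+)E$ is regular. Factor it as $\prod J_i(+)E$ and read off $A$-components to get $I=\prod J_i$, where each $J_i$ is sdf-absorbing by (i).

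\emph{Converse.} Assume the conditions on $A$ and $E$. A regular ideal $H$ contains some $(s,x)$ with $s\in S$, and $(s,x)$ is a unit times $(s,0)$ modulo a nilpotent adjustment. Using $sE=E$, we get $0(+)E=(s,0)(0(+)E)\subseteq H$, so $H=I(+)E$ with $I\cap S\ne\emptyset$. Writing $I=\prod J_i$ with each $J_i\supseteq I$ meeting $S$, we have $J_iE=E$, hence $\prod(J_i(+)E)=I(+)E$. Each $J_i(+)E$ is sdf-absorbing by the converse in (i), which completes the factorization.
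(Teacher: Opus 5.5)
Apart from one step, your argument follows the paper's proof. Your direct check that an sdf-absorbing $H$ contains $0(+)E$ replaces the citation of \cite[Theorem 3.2]{AW}. Your treatment of the $A$-side of the forward direction of (ii), and of the converse of (ii), is essentially the paper's.

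The genuine gap is the one you flag yourself: in the forward direction of (ii) you never prove $sE=E$, and the proposal ends with a hope rather than an argument. The missing idea is that, once (i) is known, the SAF hypothesis makes \emph{every} regular ideal of $B=A(+)E$ homogeneous. This is because a product of ideals $J_i(+)E$ equals $J_1\cdots J_n(+)\bigl(\sum_i\prod_{k\neq i}J_k\bigr)E$. You tested this only on $(s,0)B=sA(+)sE$, which is homogeneous for trivial reasons. Test it instead on $(s,x)B$ for an arbitrary $x\in E$. Since $(s,x)$ is regular, $(s,x)B$ is homogeneous, so together with $(s,x)$ it contains $(0,x)$. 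Writing $(0,x)=(s,x)(a,y)=(sa,sy+ax)$ gives $sa=0$, hence $a=0$ and $x=sy\in sE$. This is exactly what the paper extracts from \cite[Theorem 3.9]{AW} ("every regular ideal of $B$ is homogeneous, so $sE=E$").

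Your own route through $(s,0)B$ also closes, using a cancellation you did not find. Suppose $sA(+)sE=\prod_{i=1}^n(J_i(+)E)$ with $n\geq 2$. Then $sA=J_1\cdots J_n$ and
\[
sE=J_1\cdots J_nE\subseteq J_2\cdots J_nE\subseteq\Bigl(\sum_{i}\prod_{k\neq i}J_k\Bigr)E=sE,
\]
so $J_2\cdots J_nE=sE$. Multiplying by $J_1$ gives $sE=sJ_1E$. Since $s$ is a nonzerodivisor on $E$, this yields $J_1E=E$. By symmetry $J_iE=E$ for every $i$, hence $sE=J_2\cdots J_nE=E$.

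One caveat is shared with the paper. Radicality of sdf-absorbing ideals holds only for nonzero ideals. For example, the zero ideal of $\mathbb{Z}_3(+)\mathbb{Z}_3\cong\mathbb{Z}_3[X]/(X^2)$ is sdf-absorbing but not radical, and it is not of the form $J(+)E$. So (i), and its converse, should be stated for nonzero ideals. This does not affect (ii), where every factor contains a regular element.
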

\begin{proof} 
Set $B=A(+)E$.
Following \cite{AW}, call an ideal $J$ of $B$ {\em homogeneous}
if it has the form $I (+) N$ where $I$ is an ideal of $A$,
$N$ is a submodule of $E$ and $IE \subseteq N$. It can be checked directly (see also the last paragraph of page 12 in \cite{AW}) that the product of two homogeneous ideals is homogeneous.

{\em Proof of $(i)$.} By \cite[Theorem 2.2]{ABC} every sdf-absorbing ideal is radical and by \cite[Theorem 3.2]{AW},  radical ideals of $B$ are homogeneous, more precisely they have the form $J(+)E$ where $J$ is a radical ideal of $A$. So it is enough to show that $J$ is an sdf-absorbing ideal of $A$ iff $J(+)E$ is an sdf-absorbing ideal of $B$. 
$(\Rightarrow)$ For this let $(a,m)^2-(a,n)^2=(a^2-b^2, 2am-2bn) \in J(+)E$. Since $J(+)E$ is sdf-absorbing ideal of $B$ therefore $(a+b, m+n)\in J(+)E$ or $(a-b, m-n)\in J(+)E$. Inferred that $a^2-b^2\in J$ implies that $a+b\in J$ or $a-b\in J$ and hence $J$ is sdf-absorbing ideal of $A$. 

$(\Leftarrow)$ Let $(a+b,m+n)(a-b,m-n)=(a,m)^2-(a,n)^2\in J(+)E$. So, $(a+b)(a-b)=a^2-b^2\in J$, as $J$ is sdf-absorbing ideal of $A$ implies that $a+b\in J$ or $a-b\in J$. Consequently, $(a+b, m+n)\in J(+)E$ or $(a-b, m-n)\in J(+)E$, as desired.
 
{\em Proof of $(ii)$.} $(\Rightarrow)$ Since $B$ is an SAF-ring and, as noticed above, a product of homogeneous ideals is homogeneous, it follows that 
every regular ideal of $B$ is homogeneous, so $sE=E$ for each $s\in S$, cf. 
\cite[Theorem 3.9]{AW}. Let $J$ be an ideal of $A$ with $J\cap S\neq \emptyset$. By  \cite[Theorem 3.5]{AW}, $J(+)E$ is a regular ideal of $B$, that being the case $J(+)E=(J_1(+)E)\cdots (J_n(+)E)$ with each $J_i(+)E$ and hence due to $(i)$ each $J_i$ an sdf-absorbing ideal of $B$ respectively $A$. We get $J=J_1\cdots J_n$, thus $A$ is an SAF-ring.

$(\Leftarrow)$ Let $I$ be a regular ideal of $B$. By \cite[Theorem 3.9]{AW}, $I=J(+)E$ for some ideal $J$ of $A$ with $J\cap S\neq \emptyset$. By our assumption, $J$ is a product of radical ideals of $A$, say $J=J_1\cdots J_n$. Since $sE=E$ for each $s\in S$, we get $J(+)E=(J_1(+)E)\cdots (J_n(+)E)$ with each $J_i(+)E$ a radical ideal of $B$, cf. \cite[Theorem 3.2]{AW}.
\end{proof}

\begin{example}
 Let $A$ be a ring and $\Gamma$ the set of regular ideals of $A$ which cannot be written as a product of sdf-absorbing ideals. For each $I\in \Gamma$ choose a maximal  ideal $N_I$ containing $I$, then set $E=\oplus_{I\in \Gamma}A/N_I$.
 Then  $A(+)E$ is an SAF-ring. Indeed, set $S=A-(Z(A)\cup Z(E))$ and note that $Z(E)=\cup_{I\in \Gamma}N_I$. If $s\in S$, then $s(A/N_I)=A/N_I$ for every $I\in \Gamma$, so $sE=E$. Apply Proposition \ref{6}(ii). 
\end{example}
\begin{example}
$(1).$ Every PID is an SAF-ring. Indeed, sdf-absorbing ideals of a PID are either prime ideals or product of prime ideals, cf. \cite[Corollary 4.3]{ABC}. In particular, $K[x]$ and $\mathbb{Z}$ are SAF-rings.

$(2).$ Let $R$ be a boolean ring then $R$ is an SAF-ring. Indeed, every proper ideal of $R$ is an sdf-absorbing ideal of $R$ since $x^2=x$ for every $x\in R$.
\end{example}

{
  


\begin{example}
Let $D = K[[X,Y]]$ where $K$ is a field. Then $M = (X,Y)$ is the unique maximal ideal of $D$, and so $\dim(D) = 2$. Define $F = D/M$, which is a field, and $R = D(+)F$. We have $\dim(R) = \dim(D) = 2$; then by \cite[Lemma 1]{WCA}, $R$ cannot be a ZPI-ring. Now, we show that $R$ is an SAF-ring. Let $(f,\overline{g})$ be a regular element of $R$. Suppose that $f = 0$; then $(0,\overline{g})(X,\overline{0}) = (0,\overline{0})$, which is absurd. Now, assume that $f(0,0) = 0$; then $f \in M$. So $(f,\overline{g})(0,\overline{h}) = (0,\overline{0})$ for all $h \in D$ with $h(0,0) \neq (0,0)$, which is absurd. Thus, $f$ is a unit. Conversely, suppose that $f$ is a unit, and let us show that $(f,\overline{g})$ is regular for all $g \in D$. Let $h,g' \in D$ such that $(f,\overline{g})(h,\overline{g'}) = (0,\overline{0})$. Then $fh = 0$ and $\overline{fg'} + \overline{hg} = \overline{0}$. This implies that $h = 0$ and $\overline{fg'} = \overline{0}$, and thus $h = 0$ and $\overline{g'} = \overline{0}$. Therefore, $(h,\overline{g'}) = (0,\overline{0})$, and so $(f,\overline{g})$ is regular. We conclude that $(f,\overline{g}) \in R$ is regular if and only if $f$ is a unit. Let $H$ be a regular ideal of $R$; then there exists a regular element $(f,\overline{g}) \in H$, so $f$ is a unit. Thus, $(0)(+)F \subseteq H$. By \cite[Theorem 3.1]{AW}, $H = I(+)F$ for some ideal $I$ of $D$, which implies that $f \in I$. Since $f$ is a unit, $I = D$, and so $H = R$. Therefore, the unique regular ideal of $R$ is $R$ itself, which yields that $R$ is an SAF-ring.
\end{example}

\section{Strongly sdf-absorbing factorization rings}

 Call a ring $A$ a {\em strongly sdf-absorbing factorization ring (SSAF-ring)} if every ideal is a product of sdf-absorbing ideals. Clearly, SSAF-domains are exactly SAF-domains and every SSAF-ring is an SAF-ring. Recall that an SSP ring is a ring in which every proper ideal is a product of radical ideal, so every SSAF ring is an SSP-ring. 
 The ring $B=\mathbb{Z}_2[x,y]/(x^2,xy,y^2)$ is zero-dimensional (hence an SAF-ring) but not SSP because $Spec(B)=\{(x,y)B\}$  and $xB$ is not a power of $(x,y)B$ and hence not SSAF-ring.

\begin{proposition}\label{8}
Let $A$ and $B$ be  rings.

$(a)$ If $A$ is an SSAF-ring, then every factor ring (resp. fraction ring) of $A$ is SSAF.

$(b)$ If $A$ is an SAF-ring and $H$ a regular  ideal of $A$, then  $A/H$ is an SSAF-ring.

$(c)$ $A\times B$ is an SSAF-ring iff $A$ and $B$ are SSAF-rings.
\end{proposition}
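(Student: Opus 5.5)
For (a), the factor-ring case, let $\pi\colon A\to A/I$ be the canonical map and let $\bar J = J/I$ be an ideal of $A/I$. Since $A$ is SSAF, the preimage $J$ can be written as $J=J_1\cdots J_n$ with each $J_k$ sdf-absorbing. Then $\bar J=\pi(J)=\pi(J_1)\cdots\pi(J_n)$, and $\pi(J_k)=(J_k+I)/I$. The difficulty is that $J_k+I$ need not be sdf-absorbing in general. I would handle this by noting that $I\subseteq J\subseteq J_k$ for every $k$, so in fact $\pi(J_k)=J_k/I$. For an ideal $J_k\supseteq I$, the quotient $J_k/I$ is sdf-absorbing in $A/I$ whenever $J_k$ is sdf-absorbing in $A$. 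This is the homomorphic-image result of \cite{ABC}. If the nonzero-element restriction causes trouble, one checks directly: given $\bar a^2-\bar b^2\in J_k/I$ with $\bar a,\bar b\neq 0$, we have $a,b\neq 0$ and $a^2-b^2\in J_k$, hence $a\pm b\in J_k$.

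For fraction rings $A_S$ with $S$ an arbitrary multiplicative set, take an ideal $H$ of $A_S$ and set $J=H\cap A=J_1\cdots J_n$. Then $H=J_1A_S\cdots J_nA_S$, and each $J_kA_S$ is either $A_S$ or sdf-absorbing by \cite[Theorem 2.9]{ABC}, exactly as in the proof of Proposition~\ref{1}. Factors equal to $A_S$ are simply dropped.

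For (b), let $H$ be regular and let $\bar J=J/H$ be an ideal of $A/H$. Then $J\supseteq H$ is regular, so the SAF property gives $J=J_1\cdots J_n$ with each $J_k$ sdf-absorbing. Each $J_k$ contains $J\supseteq H$, so by the argument in (a) each $J_k/H$ is sdf-absorbing and $\bar J=\prod J_k/H$. The point that needs care is that every ideal of $A/H$, regular or not, lifts to a regular ideal of $A$, precisely because it contains $H$. That is what upgrades SAF to SSAF.

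For (c), every ideal of $A\times B$ is of the form $I\times J$, and this equals $(I\times B)(A\times J)$. If $A$ and $B$ are SSAF, factor $I$ and $J$ and take products as in the proof of the direct-product proposition in Section~2. That argument uses \cite[Theorem 4.13(b)]{ABC} to see that $I_k\times B$ and $A\times J_k$ are sdf-absorbing. Conversely, $A\cong (A\times B)/(0\times B)$ and $B\cong (A\times B)/(A\times 0)$ are factor rings, so part (a) applies. The main obstacle throughout is the transfer of sdf-absorbing ideals through quotients, and it is resolved by the containment $I\subseteq J_k$.
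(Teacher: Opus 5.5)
Your proposal is correct and follows the paper's route. Part (b) is exactly the paper's argument: an ideal containing $H$ is regular, so you factor it and pass to the quotients $J_k/H$. For (a) and (c), which the paper leaves as easy checks, you supply the natural verifications via the containment $I\subseteq J_k$, the localization argument of Proposition~\ref{1}, and the product argument of Section~2, with the converse of (c) neatly reduced to (a). The one external input in (c), that $I_k\times B$ is sdf-absorbing, needs $I_k$ to be radical; this is the same standing fact the paper takes from \cite{ABC}.
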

\begin{proof}
The  assertions $(a)$ and $(c)$ are easy to check. $(b)$. Assume that $A$ be an SAF-ring and $H$ a regular proper ideal of $A$. Let $I\supseteq H$ be an ideal of $A$. Then $I$ is a regular ideal, so  $I=J_1\cdots J_n$ with each $J_i$ a sdf-absorbing ideal, because  $A$ is an SAF-ring.
We get $I/H=(J_1/H)\cdots (J_n/H)$ with each $J_i/H$ a sdf-absorbing ideal.
\end{proof}

Recall that a ring $R$ is a {\em special primary ring} if $Spec(R)=\{M\}$  and each proper ideal of $R$ is a power of $M$.


Recall that an {\em almost multiplication ring} is a ring whose localizations at its prime ideals are discrete rank one  valuation domains or special primary rings. 
The following result is an analogue of Theorem \ref{10} (the proof being rather similar).

\begin{theorem}\label{11}
 The following implications hold, and neither is reversible in general: 
 \[ \text{SSAF-rings} \Longrightarrow \text{SSP-rings} \Longrightarrow \text{almost multiplication rings}. \]
  \end{theorem}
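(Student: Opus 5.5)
The first implication is immediate from the definitions. By \cite[Theorem~2.2]{ABC}, every sdf-absorbing ideal is radical. So if every ideal of $A$ is a finite product of sdf-absorbing ideals, then every proper ideal is a finite product of radical ideals, and $A$ is an SSP-ring. For the second implication I will cite the known result, the strongly factorizing analogue of \cite[Theorem~2.6]{AD}, that an SSP-ring is an almost multiplication ring. The idea behind it is that the SSP property passes to localizations at prime ideals, by Proposition~\ref{8}(a)-type reasoning for SSP-rings. A quasi-local SSP-ring $A_P$ then either has principal, and hence invertible, ideals generated by elements of $P\setminus P^2$, so that $A_P$ is a DVR, or has $P$ nilpotent with every ideal a power of $P$, which makes $A_P$ special primary. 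If this is not available in the literature, I will argue it directly. In a quasi-local SSP-ring, each ideal $I\neq 0$ between $P^2$ and $P$ is a product of radical ideals containing $I$. The only radical ideal containing such an $I$ is $P$ itself, since $\sqrt{I}=P$ forces every proper radical overideal to equal $P$. Hence $I=P^k$, so $I=P$. This shows $\dim_{A/P}P/P^2\le 1$, and the standard argument then gives DVR or special primary.

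For non-reversibility of the first implication, I want an SSP-ring that is not SSAF, and the natural source is rings with $2\in U(R)$. The plan is to use the domain $C$ of Example~\ref{E2}. It is an SP-domain, hence an SSP-domain, because for domains the SP and SSP conditions coincide: the only non-regular ideal is $0$, which is sdf-absorbing and radical trivially. Example~\ref{E2} shows that $C$ is not SAF, hence not SSAF. A simpler alternative would be a non-Dedekind almost Dedekind situation, but $C$ already suffices.

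For non-reversibility of the second implication, I need an almost multiplication ring that is not SSP. Here I will take a non-Noetherian almost Dedekind domain that is not an SP-domain, for instance the classical examples in \cite{O} or \cite{VY} of almost Dedekind domains with a critical maximal ideal. Such a domain is an almost multiplication ring because its localizations at maximal ideals are DVRs. It is not SP, hence not SSP.

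The main obstacle is justifying SSP $\Rightarrow$ almost multiplication rigorously, namely that SSP localizes and that the quasi-local analysis yields a special primary ring when $P$ is not regular. I would try first to cite the SSP literature (Ahmed--Dumitrescu) and fall back on the direct argument sketched above.
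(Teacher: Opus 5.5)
Your treatment of the two implications matches the paper. The first comes from the definitions and \cite[Theorem~2.2]{ABC}, and the second from citing the SSP analogue of \cite[Theorem~2.6]{AD}, which is \cite[Theorem~3.3]{AD}. Your witness for $\text{SSP}\not\Rightarrow\text{SSAF}$, the domain $C$ of Example~\ref{E2}, is also the paper's.

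The genuine difference is the non-reversibility of the second implication. The paper only asserts that Examples~\ref{E1}, \ref{E2} and \ref{E3} supply the counterexamples. But all three are SP-domains, so none of them is an almost multiplication ring that fails to be SSP. Your almost Dedekind domain with a critical maximal ideal is exactly what is needed: it is not SP by Olberding's characterization \cite{O}, the same source the paper uses in Example~\ref{E2}. So on this point your argument is the complete one. Add only that the localization at $(0)$ is the quotient field, which is trivially special primary; your remark about maximal ideals alone does not cover every prime in the definition used here.

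Do not rely on your fallback argument for $\text{SSP}\Rightarrow$ almost multiplication.
\begin{itemize}
\item From $P^2\subseteq I\subseteq P$ and $I=P^k$ you get $I\in\{P,P^2\}$, not $I=P$. This is still enough for $\dim P/P^2\le 1$.
\item The passage from $\dim P/P^2\le 1$ to ``DVR or special primary'' is not standard without a finiteness hypothesis. Nakayama is unavailable, the case $P=P^2$ must be excluded separately, and a non-regular maximal ideal of the localization must be shown to be nilpotent.
\end{itemize}
That is the actual content of \cite[Theorem~3.3]{AD}, so keep the citation rather than the sketch.
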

   \begin{proof}
    The implication from SSAF-rings to SSP-rings follows directly from the definitions; see also \cite[Theorem~2]{ABC}. The implication from SSP-rings to almost multiplication rings follows from \cite[Theorem~3.3]{AD}. 
    \end{proof}
    
   Since the notions of an SSAF-ring and a SAF-ring coincide for integral
domains, as do those of an SSP-ring and an SP-ring, and those of an
\(N\)-ring and an almost multiplication ring, Examples~\ref{E1}, \ref{E2} and \ref{E3}
provide the required counterexamples in the corresponding domain settings.







We extend the well-known fact that Noetherian SP-domains are exactly Dedekind domains.

\begin{corollary}\label{14}
A ring $A$ is a Noetherian SSAF-ring iff $A$ is a Noetherian SSP-ring iff $A$ is a ZPI-ring.
\end{corollary}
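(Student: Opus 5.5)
We prove the cycle of implications: Noetherian SSAF $\Rightarrow$ Noetherian SSP $\Rightarrow$ ZPI $\Rightarrow$ Noetherian SSAF.

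\emph{First implication.} By Theorem~\ref{11}, every SSAF-ring is an SSP-ring, since sdf-absorbing ideals are radical. Noetherianity is untouched, so nothing more is needed.

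\emph{Second implication.} For a Noetherian SSP-ring $A$ being ZPI, we use the classical route. Theorem~\ref{11} (via \cite[Theorem~3.3]{AD}) makes $A$ an almost multiplication ring. It is a standard result (Gilmer, Mott) that a Noetherian almost multiplication ring is a multiplication ring, and that Noetherian multiplication rings are exactly the ZPI-rings. Alternatively, we can argue structurally. A Noetherian ring is a finite product of indecomposable rings, and $A$ has dimension at most one because every localization $A_P$ is a DVR or special primary. By Proposition~\ref{8}(c) we may assume $A$ is indecomposable. The factors are then Dedekind domains or special primary rings, and these are the building blocks of ZPI-rings by the Asano--Mori decomposition. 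I expect this step to be the only real content. The point to check is that the local condition (DVR or special primary at every prime) plus Noetherian gives ZPI. This follows because a finitely generated ideal is locally a power of the maximal ideal, so it is locally a product of primes. A Noetherian ring has only finitely many minimal primes over any ideal, which lets us glue the local factorizations into a global one.

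\emph{Third implication.} Let $A$ be a ZPI-ring. By the Asano--Mori decomposition, $A$ is Noetherian: it is a finite product of Dedekind domains and special primary rings, each of which is Noetherian. Every proper ideal of $A$ is a finite product of prime ideals, and prime ideals are sdf-absorbing \cite{ABC}. The ideal $A$ itself is the empty product. Hence every ideal is a product of sdf-absorbing ideals, so $A$ is an SSAF-ring. This closes the cycle.
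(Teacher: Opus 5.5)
Your proof is correct and follows the paper's route: pass from SSAF to SSP, then use the known equivalence "Noetherian SSP $=$ ZPI". The paper cites this directly as \cite[Corollary~3.5]{AD}, whereas you unpack it via almost multiplication rings and the classical Mott--Gilmer theorem, and you also make explicit the converse direction (ZPI-rings are Noetherian, and prime ideals are sdf-absorbing), which the paper leaves implicit. Your ``alternative'' structural sketch is looser: Proposition~\ref{8}(c) is stated for SSAF-rings rather than SSP-rings, the claim that the indecomposable factors are Dedekind domains or special primary rings is asserted without proof, and the gluing step is only gestured at. None of it is needed once you rely on the citation.
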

\begin{proof}
SSAF-rings $\Rightarrow$ SSP-rings. Now apply \cite[Corollary 3.5]{AD}.
\end{proof}

Recall that an $A$-module $E$ is a {\em multiplication module} if for each submodule $F$ of  $E$, $F = IE$ for some ideal $I$ of $A$.

\begin{proposition}
If $A(+)E$ is an SSAF-ring, then $E$ is a multiplication module and every ideal of $A(+)E$ is homogeneous.
\end{proposition}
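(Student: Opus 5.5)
The strategy is to use the fact that the product of homogeneous ideals of $B=A(+)E$ is homogeneous, together with the description of sdf-absorbing ideals of $B$ in Proposition~\ref{6}(i). Let $H$ be any ideal of $B$. If $H=B$, it is trivially homogeneous. Otherwise, the SSAF hypothesis writes $H=H_1\cdots H_n$ with each $H_i$ an sdf-absorbing ideal of $B$. By Proposition~\ref{6}(i), $H_i=J_i(+)E$ for an sdf-absorbing ideal $J_i$ of $A$. In particular each $H_i$ is homogeneous, so the product $H$ is homogeneous. We should note explicitly that the part of Proposition~\ref{6}(i) used here does not depend on any regularity assumption: it relies only on the facts that sdf-absorbing ideals are radical \cite[Theorem 2.2]{ABC} and that radical ideals of $B$ have the form $J(+)E$ \cite[Theorem 3.2]{AW}.

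We then make the product explicit, since it will be needed for the module statement. By induction on $n$, using $(J(+)E)(J'(+)E)=JJ'(+)(JE+J'E)$, we get
\[
H=(J_1(+)E)\cdots(J_n(+)E)=J_1\cdots J_n\,(+)\,\Big(\sum_{i=1}^n J_1\cdots\widehat{J_i}\cdots J_n\Big)E .
\]
So the module component of $H$ is $IE$ for the ideal $I=\sum_i \prod_{j\ne i}J_j$ of $A$. When $n=1$ the empty product is $A$, and the module component is $E=AE$.

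For the multiplication property, let $F$ be a submodule of $E$ and take $H=0(+)F$. This is an ideal of $B$, since $(a,x)(0,f)=(0,af)\in 0(+)F$. If $F=E$, then $F=AE$ and there is nothing to prove. Otherwise $H\neq B$, and the previous paragraph gives $F=IE$ for some ideal $I$ of $A$. Hence $E$ is a multiplication module.

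The main point to check carefully is that Proposition~\ref{6}(i) really applies to every sdf-absorbing ideal of $B$, not only to regular ones. I would also verify the homogeneous product formula directly. One small bookkeeping issue needs attention: an sdf-absorbing ideal is proper, so a radical ideal of $B$ of the form $J(+)E$ requires $J\neq A$. The degenerate case $H=0$ needs no special treatment, because it is still a product of sdf-absorbing ideals and the formula above applies to it.
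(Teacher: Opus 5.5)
Your proposal is correct and follows the paper's own argument. You factor $H$ into sdf-absorbing ideals, use Proposition~\ref{6}(i) to write each factor as $J_i(+)E$, use that products of homogeneous ideals are homogeneous, and specialize to $H=0(+)F$. Your explicit formula for the module component of $\prod_i (J_i(+)E)$ just spells out the step the paper leaves implicit.

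One caveat, which the paper shares, concerns your remark that $H=0$ needs no special treatment. Radicality of sdf-absorbing ideals holds only for \emph{nonzero} ones. For example, the zero ideal of $\mathbb{Z}_3(+)\mathbb{Z}_3\cong\mathbb{Z}_3[x]/(x^2)$ is sdf-absorbing but not of the form $J(+)E$. So when $H=0$, its given factorization need not consist of ideals $J_i(+)E$. This case is trivial, though: $0=0(+)0$ is homogeneous, and $0(+)F=0$ forces $F=0=0\cdot E$. A nonzero $H$ has only nonzero factors, and your argument applies to it verbatim.
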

\begin{proof}
By Proposition \ref{6}(i) the sdf-absorbing idrals of $A(+)E$ is homogeneous, more precisely they have the form $I(+)E$ with $I$ a sdf-absorbing ideal of $A$. Then every ideal of $A(+)E$ is homogeneous, see the first paragraph in the proof of Proposition \ref{6}. Since $0(+)F$ (with $F$ a submodule of $E$) is a product of sdf-absorbing ideals we get $F=IE$ for some ideal $I$ of $A$.
\end{proof}

\begin{lemma}\label{16}
 If $A$ is a von Neumann regular ring such that $char(A)=2$ and $E$ is a multiplication $A$-module, then $A(+)E$ is an SSAF-ring. 
\end{lemma}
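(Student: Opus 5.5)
Since $A$ is von Neumann regular and $\operatorname{char}(A)=2$, the quotient $B=A(+)E$ also has characteristic $2$: $2(a,x)=(2a,2x)=0$ because $2\cdot 1_A=0$ and $E$ is an $A$-module. By \cite[Theorem~2.4]{ABC}, used as in Theorem~\ref{ch2}, every radical ideal of a ring of characteristic $2$ is sdf-absorbing. So it suffices to show that every proper ideal of $B$ is a finite product of radical ideals, i.e.\ that $B$ is an SSP-ring. Alternatively, one could invoke a known result that $A(+)E$ is SSP for $A$ von Neumann regular and $E$ multiplication, presumably in \cite{AD}. I will instead sketch a direct argument.

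\textbf{Step 1: every ideal of $B$ is homogeneous.} Let $H$ be an ideal of $B$ and $(a,x)\in H$. Write $a=a^2u$ with $e=au$ idempotent. Then
\[ (e,0)(a,x)=(a,ex)\in H, \qquad (a,x)-(a,ex)=(0,(1-e)x)\in H. \]
Put $y=(1-e)x$, so $ey=0$. Multiplying $(a,ex)$ by $(u,0)$ gives $(e,uex)\in H$. Also $(e,uex)(e,-uex)=(e,0)$, using $2=0$ and $e^2=e$, so $(e,0)\in H$. Hence $(a,0)=(a,0)(e,0)\in H$ and $(0,x)\in H$. This shows $H=I(+)N$ with $I=\{a:(a,0)\in H\}$, $N=\{x:(0,x)\in H\}$ and $IE\subseteq N$.

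\textbf{Step 2: write $N=JE$ and factor.} Since $E$ is a multiplication module, $N=JE$ for some ideal $J$; replacing $J$ by $J+I$ we may assume $I\subseteq J$. Every ideal of a von Neumann regular ring is radical and idempotent. I claim
\[ H=I(+)JE=(I(+)E)\cdot(J(+)JE). \]
The product is $IJ(+)(IE+JIE)$. Since $I\subseteq J$ are idempotent ideals of a von Neumann regular ring, $IJ=I\cap J=I$, and the module part is $IE$. That only yields $I(+)IE$, not $I(+)JE$, so this factorization is wrong as stated.

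The better route is to use the radical ideals of $B$, which by \cite[Theorem~3.2]{AW} are exactly those of the form $K(+)E$. Any product of such ideals is $K_1\cdots K_n(+)E=(K_1\cap\cdots\cap K_n)(+)E$, which always has full module part $E$. So an ideal $H=I(+)N$ with $N\neq E$ is not such a product. I therefore suspect that in this setting \cite[Theorem~3.2]{AW} should be read with its hypothesis (e.g.\ for nilpotents) in mind: radical ideals contain the nilradical $0(+)E$. Hence products of radical ideals do contain $0(+)E\cdot$ stuff. Indeed $(K_1(+)E)(K_2(+)E)=K_1K_2(+)(K_1+K_2)E$, whose module part is $(K_1+K_2)E$ rather than $E$.

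\textbf{Step 3: the corrected factorization.} With the computation $(K_1(+)E)(K_2(+)E)=K_1K_2(+)(K_1+K_2)E$, take $K_1=I$ and $K_2=J$ with $I\subseteq J$. This gives
\[ (I(+)E)(J(+)E)=IJ(+)JE=I(+)JE=H, \]
using $IJ=I$. Each factor is radical, hence sdf-absorbing because the characteristic is $2$. If $J=A$, then $H=I(+)E$ is itself radical. If $H$ is proper and $I=A$, then $H=B$, so this case does not arise.

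\textbf{Main obstacle.} The hardest step is Step 1, the homogeneity of every ideal, together with getting the product formula for homogeneous ideals exactly right. I expect the idempotent trick in Step 1 to handle the first of these.
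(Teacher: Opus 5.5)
Your proof, read as Step~1 followed directly by Step~3, is correct and is essentially the paper's argument. Every ideal is homogeneous, $N=JE$ with $I\subseteq J$, and $(I(+)E)(J(+)E)=IJ(+)(I+J)E=I(+)JE$ with both factors radical, hence sdf-absorbing because $B$ has characteristic $2$. The only difference is that you prove homogeneity directly with the idempotent $e=au$ instead of citing \cite[Theorem~3.3]{AW}.

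Drop Step~2 from the write-up, since its claims are wrong. Products of ideals $K_i(+)E$ need not have module part $E$. Also, $(I(+)E)(J(+)JE)$ actually equals $IJ(+)(IJE+JE)=I(+)JE$; that attempt fails only because $J(+)JE$ need not be radical.
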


\begin{proof}
As $A$ is von Neumann regular, every ideal of $A$ is radical. Since $\operatorname{char}(A)=2$, by \cite[Theorem 2.4]{ABC} every proper ideal of $A$ is an sdf-absorbing ideal. So by \cite[Theorem 4.16]{ABC}, sdf-absorbing ideals of $A(+)E$ have the form $I(+)E$ with $I$ an ideal of $A$. By \cite[Theorem 3.3]{AW}, every ideal of $A(+)E$ is homogeneous. Let $H$ be an ideal of $A(+)E$. As $E$ is a multiplication module, $H = I(+)JE$ for some ideals $I$ and $J$ of $A$ such that $IE \subseteq JE$. Replacing $J$ with $I + J$, we may assume that $I \subseteq J$. As $A$ is von Neumann regular, we have $I = IJ$. Now $I(+)E$ and $J(+)E$ are sdf-absorbing ideals and $(I(+)E)(J(+)E) = IJ(+)(I + J)E = H$.
\end{proof}

\begin{corollary}
Let $A$ be a ring with $char(A)=2$ and $E$ an $A$-module such that $E$ has an element with zero annihilator, then the following are equivalent.

$(a)$ $A(+)E$ is an SSAF-ring.

$(b)$ $A(+)E$ is an SSP-ring.

$(c)$ $A(+)E$ is an almost multiplication ring.

$(d)$ $A$ is a von Neumann regular ring and $E\cong A$.

\end{corollary}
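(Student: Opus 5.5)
We prove the cycle $(a)\Rightarrow(b)\Rightarrow(c)\Rightarrow(d)\Rightarrow(a)$. The first two implications are Theorem~\ref{11}: every SSAF-ring is an SSP-ring, and every SSP-ring is an almost multiplication ring. The last implication is Lemma~\ref{16}. If $A$ is von Neumann regular with $\operatorname{char}(A)=2$ and $E\cong A$, then $E$ is cyclic and hence a multiplication $A$-module, since every submodule of $A$ is an ideal $I=IA$. Lemma~\ref{16} then shows that $A(+)E$ is an SSAF-ring. The real content is therefore $(c)\Rightarrow(d)$.

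Assume $B=A(+)E$ is an almost multiplication ring, and let $e\in E$ with $\operatorname{ann}_A(e)=0$. The plan is to work locally at primes of $B$. These all have the form $P(+)E$ with $P\in\operatorname{Spec}(A)$, and $B_{P(+)E}\cong A_P(+)E_P$. By hypothesis each such localization is either a discrete rank one valuation domain or a special primary ring.

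The domain case is impossible unless $E_P=0$, because $0(+)E_P$ is a nilpotent ideal. But $E_P\neq 0$, since $e/1\neq 0$ in $E_P$: if $se=0$ with $s\notin P$, then $s\in\operatorname{ann}(e)=0$, a contradiction. Hence every $A_P(+)E_P$ is special primary. Its unique prime is $PA_P(+)E_P$, and every proper ideal is a power of it. In particular $0(+)E_P$ and $PA_P(+)E_P$ are both powers of the maximal ideal $\mathfrak m=PA_P(+)E_P$. Note that $\mathfrak m^2=P^2A_P(+)PE_P$.

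From this we extract two consequences.

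First, $PA_P=0$. If $PA_P\neq 0$, then $0(+)E_P=\mathfrak m^k$ forces $P^kA_P=0$ with $k\ge 2$. We then compare the chain $\mathfrak m\supsetneq \mathfrak m^2\supsetneq\cdots$ with the ideal $0(+)E_P$. Since $\mathfrak m^j=P^jA_P(+)P^{j-1}E_P$, the equality $0(+)E_P=\mathfrak m^k$ gives $P^{k-1}E_P=E_P$. Using the faithful element $e/1$ (which gives $A_P\hookrightarrow E_P$ via $a\mapsto ae$), this leads to $A_P e\subseteq P^{k-1}E_P$. I expect this nilpotency/Nakayama-type bookkeeping to be the main obstacle. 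The first thing to try is the simpler observation that the ideal $0(+)A_Pe$ is itself a power $\mathfrak m^j$. Its first coordinate is $0$, so $P^jA_P=0$; and $\mathfrak m^j\supseteq 0(+)P^{j-1}E_P$, which should force $P^{j-1}E_P=A_Pe$. Continuing this way yields $P A_P=0$. Consequently $A_P$ is a field for every prime $P$, so $A$ is zero-dimensional and reduced, i.e.\ von Neumann regular.

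Second, once $A_P$ is a field, the proper ideals of $A_P(+)E_P$ are exactly the powers $\mathfrak m=0(+)E_P$, $\mathfrak m^2=0$. Every $A_P$-subspace $F$ of $E_P$ gives an ideal $0(+)F$, so $E_P$ has no proper nonzero subspaces, i.e.\ $\dim E_P=1$. Thus $E_P=A_P\,(e/1)$ for all $P$. The map $A\to E$, $a\mapsto ae$, is injective because $\operatorname{ann}(e)=0$, and it is locally surjective, hence surjective. Therefore $E\cong A$, which completes $(c)\Rightarrow(d)$. The characteristic $2$ hypothesis is used only in $(d)\Rightarrow(a)$, through Lemma~\ref{16}.
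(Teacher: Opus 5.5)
Your $(a)\Rightarrow(b)\Rightarrow(c)$ and $(d)\Rightarrow(a)$ match the paper, via Theorem~\ref{11} and Lemma~\ref{16}. For $(c)\Rightarrow(d)$ the paper gives no argument and simply cites \cite[Corollary~2.4]{DMZ}, so your localization argument is a genuinely different, self-contained route. Its outline is sound. The primes of $B$ are the ideals $P(+)E$, and $B_{P(+)E}\cong A_P(+)E_P$. The element $e/1$ is faithful over $A_P$: if $tae=0$ with $t\notin P$, then $ta=0$. Hence $E_P\neq 0$ and every localization is special primary. Your second consequence, that $\dim_{A_P}E_P=1$ and hence $E=Ae\cong A$ by local surjectivity, is correct.

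However, the step you flag yourself, $PA_P=0$, is not actually proved as written. From $P^{k-1}E_P=E_P$ you extract only the vacuous inclusion $A_Pe\subseteq P^{k-1}E_P$. The alternative via $0(+)A_Pe$ ends with ``should force'' and ``continuing this way'', which is not an argument.

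The step closes in one line. From $0(+)E_P=\mathfrak m^k=P^kA_P(+)P^{k-1}E_P$ you have $P^kA_P=0$ and $P^{k-1}E_P=E_P$. If $k\ge 2$, then
\[
E_P=P^{k-1}\bigl(P^{k-1}E_P\bigr)=P^{2k-2}E_P\subseteq P^kE_P=0,
\]
because $2k-2\ge k$. This contradicts $E_P\neq 0$, so $k=1$, that is, $PA_P=0$.

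Your second idea also works once finished. Comparing components in $0(+)A_Pe=\mathfrak m^j$ gives $P^jA_P=0$ and $P^{j-1}E_P=A_Pe$ exactly. Hence $PA_P\,e=P^jE_P=0$, and faithfulness of $e/1$ gives $PA_P=0$.

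With either line inserted, your proof is complete. Compared with the paper, it shows exactly where the element with zero annihilator is used: to make $E_P\neq 0$ at every prime, and to make $E$ cyclic. It also shows that $(c)\Rightarrow(d)$ does not need $\operatorname{char}(A)=2$. The paper's proof is shorter only because it delegates all the structural content to \cite{DMZ}.
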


\begin{proof}
$(a)\Rightarrow (b)$ is clear, as every sdf-absorbing ideal is radical. $(b)\Rightarrow (c) \Rightarrow (d)\Rightarrow (d)$ follows from \cite[Corollary 2.4]{DMZ}. For $(d)\Rightarrow (a)$. As $E\cong A$, so $E$ is a multiplication $A$- module and $char(A)=2$. Now apply Lemma \ref{16}.
\end{proof}

\begin{example}
Let $B$ be an SSAF-ring, $C$ a von Neumann regular ring with $char(C)=2$ and $E\neq 0$ a multiplication $C$-module. Set $A=B\times C$ and consider $E$ as an $A$-module by scalar restriction via $A\rightarrow C$. Then $A(+)E$ is an SSAF-ring because it is a direct product of $B(+)0\cong B$ and the SSAF-ring $C(+)E$ (cf. Lemma \ref{16}), so Proposition \ref{8}(c) applies.
\end{example}

Let $A$ be a ring, $\Gamma$ a monoid, $I, J$ two ideals of $A[\Gamma]$, $I_0 = \{f(0) \mid f \in I\}$, and $J_0 = \{g(0) \mid g \in J\}$. If $U(\Gamma)=\{0\}$, then $(IJ)_0 = \{h(0) \mid h \in IJ\} = I_0 J_0$. Indeed, let $\alpha \in (IJ)_0$; then $\alpha = h(0)$ for some $h \in IJ$. This implies that
$$\alpha = \left(\sum f_i g_i\right)(0) = \sum \left(\sum_{t+u=0} f_i(t) g_i(u)\right) = \sum f_i(0) g_i(0)$$
for $f_i \in I$ and $g_i \in J$. Thus, $\alpha \in I_0 J_0$, and consequently $(IJ)_0 \subseteq I_0 J_0$. Conversely, let $\alpha \in I_0 J_0$; then
$$\alpha = \sum f_i(0) g_i(0) = \sum\left(\sum_{t+u=0} f_i(t) g_i(u)\right) = \left(\sum f_i g_i\right)(0)$$
for $f_i \in I$ and $g_i \in J$. This implies that $\alpha = h(0) \in (IJ)_0$ for some $h \in IJ$, and thus $I_0 J_0 \subseteq (IJ)_0$.
\begin{proposition}
Let $A$ be a ring and $\Gamma$ a monoid such that $U(\Gamma)=\{0\}$. If $A[\Gamma]$ is an SSAF-ring, then $A$ is an SSAF-ring.
\end{proposition}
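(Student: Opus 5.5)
The plan is to pull ideals of $A$ up to $A[\Gamma]$ by extension, factor them there, and push the factorization back down with the map $I\mapsto I_0$, using the identity $(IJ)_0=I_0J_0$ established just before the statement. Let $K$ be an ideal of $A$ and put $I=KA[\Gamma]$, the extended ideal. Its elements are the $f=\sum f(t)X^t$ with all $f(t)\in K$, so $I_0=K$. By hypothesis $I=H_1\cdots H_n$ with each $H_i$ an sdf-absorbing ideal of $A[\Gamma]$. Applying the identity inductively (legitimate since $U(\Gamma)=\{0\}$) gives
\[
K=I_0=(H_1)_0\cdots (H_n)_0 .
\]
It then remains to show that each $(H_i)_0$ is an sdf-absorbing ideal of $A$, or equals $A$, in which case it can be dropped from the product.

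First, $(H_i)_0$ is an ideal. If $f(0),g(0)$ lie in it and $a\in A$, then $f+g\in H_i$ and $af\in H_i$ with constant terms $f(0)+g(0)$ and $af(0)$; here $a$ is viewed as $aX^0$ and $U(\Gamma)=\{0\}$ is used again.

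For the absorption property, let $0\neq a,b\in A$ with $a^2-b^2\in (H_i)_0$, say $a^2-b^2=h(0)$ for some $h\in H_i$. I would like to conclude that $a^2-b^2\in H_i$ itself. The route is to observe that $H_i\cap A\subseteq (H_i)_0$, and to show the reverse containment $(H_i)_0\subseteq H_i\cap A$ for sdf-absorbing ideals of $A[\Gamma]$. Granted that, $a^2-b^2\in H_i$ with $a,b$ nonzero constants, so $a+b\in H_i$ or $a-b\in H_i$. These are constants, so they lie in $(H_i)_0$, and $(H_i)_0$ is sdf-absorbing.

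The main obstacle is exactly the containment $(H_i)_0\subseteq H_i$. I would try to avoid proving it in general by reorganizing the argument so that only contractions are used. The key observation is that $I_0=I\cap A$ when $I$ is extended, and that contraction $H\mapsto H\cap A$ preserves the sdf-absorbing property, since the condition only involves nonzero constants and their sums and differences, which are again constants. So the goal becomes showing that $K=\prod (H_i\cap A)$.

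The containment $\prod (H_i\cap A)\subseteq \bigl(\prod H_i\bigr)\cap A=K$ is clear. The reverse containment is where the identity $(IJ)_0=I_0J_0$ enters, together with a comparison of $(H_i)_0$ and $H_i\cap A$. If that comparison fails, the fallback is to use the $A$-algebra retraction $A[\Gamma]\to A$ sending $X^t\mapsto 0$ for $t\neq 0$. This is a ring homomorphism precisely because $t+u=0$ forces $t=u=0$. Its image of $H_i$ is $(H_i)_0$, and one checks directly that the image of an sdf-absorbing ideal under a surjection whose kernel lies in $H_i$ remains sdf-absorbing. This reduces the problem to showing $\ker\subseteq H_i$, or to handling the factor ring via Proposition \ref{8}(a): $A\cong A[\Gamma]/\ker$ is a factor ring of the SSAF-ring $A[\Gamma]$, hence SSAF. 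This last observation would in fact settle the statement directly, so I would present it as the main proof and keep the computation above as supporting detail.
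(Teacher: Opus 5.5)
Your final argument is correct, and it takes a different route from the paper's.

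\textbf{The paper's route.} The paper factors the extended ideal $I[\Gamma]=H_1\cdots H_n$ and pushes down with $(IJ)_0=I_0J_0$ to get $I=\prod (H_k)_0$. It then tries to show directly that each $(H_k)_0$ is sdf-absorbing: it writes $f=a^2-b^2+g$ and takes $h=a+g$, $l=b$. But $h^2-l^2=f+(2ag+g^2-g)$, and nothing puts $2ag+g^2-g$ in $H_k$. That step cannot be repaired in the generality in which it is argued, since it uses only that $H_k$ is sdf-absorbing. For instance, $(X-15)$ is prime in $\mathbb{Z}[X]$, yet its image under $f\mapsto f(0)$ is $15\mathbb{Z}$. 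This ideal is not sdf-absorbing, because $4^2-1^2=15$ while $5,3\notin 15\mathbb{Z}$.

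\textbf{Your route.} You observe that $\pi\colon f\mapsto f(0)$ is itself a surjective ring homomorphism. This rests on the same fact $t+u=0\Rightarrow t=u=0$ that underlies $(IJ)_0=I_0J_0$. Hence $A\cong A[\Gamma]/\ker\pi$, and Proposition~\ref{8}(a) finishes. Unwound, this factors $\pi^{-1}(K)=\{f : f(0)\in K\}$ rather than $K[\Gamma]$. Every factor then contains $\ker\pi$, and that containment is exactly what makes the images $\pi(H_i)$ sdf-absorbing: nonzero residues lift to nonzero elements, and $H_i$ absorbs the kernel. So your route is shorter and avoids the gap in the paper's verification.

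\textbf{Write-up.} Present only the retraction argument, ideally with the one-line check of Proposition~\ref{8}(a) just described. Do not keep the earlier material, even as supporting detail. The containment $(H_i)_0\subseteq H_i$ is false in general: in the example above, $(X-15)\cap\mathbb{Z}=0$ while $(X-15)_0=15\mathbb{Z}$. The equality $K=\prod(H_i\cap A)$ is never established.
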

\begin{proof}
Let $I$ be an ideal of $A$; then $I[\Gamma]$ is an ideal of $A[\Gamma]$. By definition, $I[\Gamma] = H_1 H_2 \dots H_n$ for some sdf-absorbing ideals $H_1, H_2, \dots, H_n$ of $A[\Gamma]$. We have $I = \{f(0) \mid f \in I[\Gamma]\}$, and therefore$$I = \{f(0) \mid f \in H_1 H_2 \dots H_n\} = \{g(0) \mid g \in H_1\} \dots \{g(0) \mid g \in H_n\} = J_1 J_2 \dots J_n,$$where $J_k = \{g(0) \mid g \in H_k\}$. Now we show that $J_k$ is sdf-absorbing for every $1 \le k \le n$. Let $1 \le k \le n$ and $0 \neq a, b \in A$ such that $a^2 - b^2 \in J_k$. Then $a^2 - b^2 = f(0)$ for some $f \in H_k$. Therefore, $f = a^2 - b^2 + g$ for some $g \in A[\Gamma]$ with $g(0)=0$. Let $h = a + g$ and $l = b$; then
$$h^2 - l^2 = (a + g)^2 - b^2 = \left(\sum_{t+u=s}(a+g)(t)(a+g)(u)\right) - b^2$$
$$= a^2 + a\left(\sum_{t+u=s}g(t)+g(u)\right) + g^2 - b^2.$$
This implies that $h^2 - l^2 \in H_k$ and $h \neq 0$. Since $H_k$ is sdf-absorbing, $h + l = a + b + g \in H_k$ or $h - l = a - b + g \in H_k$, and so $h(0) + l(0) = a + b \in J_k$ or $h(0) - l(0) = a - b \in J_k$. Thus, $A$ is an SSAF-ring.
\end{proof}
\begin{remark}
By the same reasoning, if $A[X]$ is an SSAF-ring, then $A$ is an SSAF-ring.
\end{remark}

\end{document}